\definecolor{light}{gray}{.10}
\DeclareMathAlphabet{\mathpzc}{OT1}{pzc}{m}{it}
\newcommand{\mbf}[1]{\mathbf{#1}}
\newcommand{\ve}{\varepsilon}
\DeclareMathOperator*{\argmin}{arg\,min}
\def\Hess{{\rm Hess\,}}
\def\supp{\mathop{\rm supp} \nolimits} 
\def\and {{\rm \; and \;}}
\newcommand {\pa}{\partial}
\newtheorem{theorem}{Theorem}
\newtheorem{proposition}{Proposition}
\newtheorem{lemma}{Lemma}
\newtheorem{corollary}{Corollary}
\newtheorem{remark}{Remark}
\titleformat*{\section}{\normalsize\bfseries}
\titleformat*{\subsection}{\normalsize\bfseries}
\titleformat*{\subsubsection}{\normalsize\bfseries}
\titleformat*{\paragraph}{\normalsize\bfseries}
\titleformat*{\subparagraph}{\normalsize\bfseries}
\date{}
\title{Sharp estimate of the mean exit time of a bounded domain in the zero white noise limit}
\author{Boris Nectoux \thanks{CERMICS, \'Ecole des Ponts, Universit\'e  Paris-Est, INRIA, 77455 Champs-sur-Marne, France. E-mail: boris.nectoux@enpc.fr} }
\begin{document} 
\maketitle
\begin{abstract}
We prove  a sharp asymptotic formula for the mean exit time from a bounded domain $D\subset \mathbb R^d$ for the overdamped Langevin dynamics $$d X_t = -\nabla f(X_t) d t + \sqrt{2\ve} \ d B_t$$ 
when $\ve \to 0$ and in the case when $D$ contains a unique non degenerate minimum of $f$ and $\pa_{\mbf n}f>0$ on $\pa D$.  
This  formula    was actually first derived  in~\cite{matkowsky-schuss-77}  using formal computations and we thus   provide,  in the reversible case, the   first   proof  of it. 
As a direct consequence, we obtain when $\ve \to 0$, a sharp asymptotic estimate of the smallest eigenvalue of  the operator 
$$L_{\ve}=-\ve \Delta +\nabla f\cdot \nabla$$
associated with Dirichlet boundary conditions on $\pa D$. The approach does not require $f|_{\partial
D}$ to be a Morse function.   
The proof is based on results from~\cite{Day2,Day4} and a formula for the mean exit time from $D$  introduced in~\cite{BEGK, BGK}. 
\end{abstract}

\section{Setting and main results}

Let us consider $(X_t)_{t\ge0}$ the stochastic process solution to the  overdamped Langevin dynamics in~$\mathbb R^d$:
\begin{equation}\label{eq.langevin}
d X_t = -\nabla f(X_t) d t + \sqrt{2\ve} \ d B_t,
\end{equation}
where $f\in C^{\infty}(\mathbb R^d,\mathbb R)$ is the potential function, $\ve>0$ is the temperature and $(B_t)_{t\geq 0}$ is a standard $d$-dimensional Brownian motion. The overdamped Langevin dynamics can be used  for instance to describe the motion of the atoms of a molecule or the diffusion of impurities in a crystal (see for instance~\cite[Sections 2 and 3]{MaSc} or \cite{chandrasekhar1943stochastic}). One of the major issues when trying to have access to the macroscopic evolution of the system  from simulations made at the microscopic level is that the process~\eqref{eq.langevin} is metastable: it is trapped during long periods of time in some regions of the configuration space. This implies that it typically  reaches a local equilibrium  of these regions long before escaping from  them.  These regions are called metastable regions (see~\cite[Chapter 8]{bovier2016metastability})  and the  move from one metastable region  to another is typically associated with a macroscopic change of  configuration of the system. The average time it takes for the process~\eqref{eq.langevin} to leave a metastable region is given by the Eyring-Kramers formula (see~\cite{HaTaBo}). 
In this work, we would like to prove, in a typical geometric setting (see \textbf{[H-D]} below),  that  the average time it takes for the process~\eqref{eq.langevin} to leave a metastable region satisfies in the small temperature regime ($\ve \to 0$) a kind of Eyring-Kramers formula even in the degenerate case   when $\argmin_{\pa D}f$ does not   consists of a finite number of non degenerate critical points of $f|_{\pa D}$.

To this end, let  us consider a $C^{\infty}$ bounded open set $D \subset \mathbb R^d$ and introduce
\begin{equation}\label{eq.tauD}
\tau_{D^c} =\inf \{ t\geq 0 | X_t \in  D^c     \}
\end{equation}
where $D^c=\mathbb R^d\setminus D$, 
the first exit time from $D$. The framework we consider in this work is the following:

\begin{itemize}
\item[] Assumption \textbf{[H-D]}: $D\subset \mathbb R^d$ is a $C^{\infty}$ bounded open set and $f\in C^{\infty}(\mathbb R^d,\mathbb R)$. The function $f$ satisfies $\pa_{\mbf n}f>0$ on $\pa D$ (where $\mbf{n}$ is the unit outward normal to $\pa D$). Moreover, $f$ has a unique critical point $x_0$ in $ D$ which is non degenerate and which satisfies $f(x_0)=\min_{\overline D} f$.
\end{itemize}

\noindent
Under the assumption \textbf{[H-D]}, it is proved in \cite[Theorem 4.1]{FrWe} (see also~\cite{sugiura1996}) that for any $x\in D$:
$$\lim_{\ve \to 0}\ve \log \mathbb E_x[ \tau_{D^c}]=\min_{\pa D}f-f(x_0).$$
 In this paper, under the assumption \textbf{[H-D]}, we prove a sharp asymptotic formula on the mean exit time from $D$ in the limit $\ve \to 0$,  a  formula  which was first obtained  using formal computations in~\cite{matkowsky-schuss-77}.   
 We also refer to~\cite{MaSc,maier1997limiting,matkowsky1981eigenvalues,matkowsky1982singular,schusstheory,schuss2009theory} where asymptotic formulas for mean exit times when $\ve \to 0$ are derived through formal computations when different geometric settings or  other diffusion processes are considered. Sharp asymptotic estimates when $\ve \to 0$ of~$\mathbb E_x[ \tau_{D^c}]$ have been obtained in~\cite[Section 4]{sugiura2001}, but these results do not apply in the setting we consider since under~\textbf{[H-D]}, $\overline{\{f< \min_{\pa D}f\}}\cap \pa D\neq \emptyset$. 
 Let us mention \cite{Ber} for a   review of the different techniques used to obtain asymptotic estimates on the mean exit time from a domain in the limit $\ve \to 0$  in various geometric settings and for an extension of the Eyring-Kramers formulas in some degenerate cases when $D=\mathbb R^d$.   Our main result is the following.
\begin{theorem}\label{th.main}
Let us assume that the assumption \textbf{[H-D]} holds.  Then, for any compact set $K\subset D$, it holds in the limit $\ve \to 0$ and  uniformly with respect  to  $x\in K$:
$$\mathbb E_x[ \tau_{D^c}]=\frac{ (2\pi\ve)^{\frac d2}   }{  \sqrt{{\rm det}\, \Hess f(x_0) }  \displaystyle\int_{\pa D}\pa_{\mbf n}f(\sigma) e^{-\frac{1}{\ve}f(\sigma)} d\sigma  }\, e^{-\frac{1}{\ve}f(x_0)} \big  (1+O(\ve) \big ),$$
where $d\sigma$ is the Lebesgue measure on $\pa D$. 
\end{theorem}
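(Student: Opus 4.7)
The plan is to compute the principal Dirichlet eigenvalue $\lambda_\ve$ of $L_\ve$ on $D$ via a Green's-identity representation in the spirit of~\cite{BEGK,BGK}, and then to identify $\mathbb E_x[\tau_{D^c}]$ with $1/\lambda_\ve$ up to a $1+O(\ve)$ factor, uniformly in $x\in K$.

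Let $\phi_\ve$ be the positive principal Dirichlet eigenfunction of $L_\ve$ on $D$, normalized so that $\phi_\ve(x_0)=1$, and $\nu_\ve$ the associated quasi-stationary distribution, for which $\mathbb E_{\nu_\ve}[\tau_{D^c}]=1/\lambda_\ve$. Integrating the eigenvalue equation $L_\ve\phi_\ve=\lambda_\ve\phi_\ve$ against $e^{-f/\ve}$ (the invariant density of the overdamped Langevin dynamics) and using $\phi_\ve|_{\pa D}=0$ yields the key identity
\[
\lambda_\ve\int_D\phi_\ve(y)\,e^{-f(y)/\ve}\,dy\;=\;-\ve\int_{\pa D}\pa_{\mbf n}\phi_\ve(\sigma)\,e^{-f(\sigma)/\ve}\,d\sigma.
\]
This recasts $\lambda_\ve$ as a ratio of a bulk integral concentrating near $x_0$ and a boundary flux concentrating near $\argmin_{\pa D}f$; each term can be handled separately by Laplace-type asymptotics.

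For the bulk integral, since $\phi_\ve$ is smooth with $\phi_\ve(x_0)=1$ and grows only polynomially in the rescaling $y=x_0+\sqrt\ve z$, Laplace's method around the unique non-degenerate interior minimum $x_0$ gives
\[
\int_D\phi_\ve(y)\,e^{-f(y)/\ve}\,dy\;=\;\frac{(2\pi\ve)^{d/2}}{\sqrt{\det\Hess f(x_0)}}\,e^{-f(x_0)/\ve}\,(1+O(\ve)).
\]
For the boundary flux, the crucial input is a sharp uniform boundary-layer expansion of $\phi_\ve$, supplied by the results of~\cite{Day2,Day4}: namely
\[
-\ve\,\pa_{\mbf n}\phi_\ve(\sigma)\;=\;\pa_{\mbf n}f(\sigma)\,(1+O(\ve)),\qquad \sigma\in\pa D,
\]
which is natural because $\pa_{\mbf n}f>0$ makes $\pa D$ purely ``exit-like'' and the leading normal derivative of the first eigenfunction is driven by the outgoing drift. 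Substituting these two expansions into the Green identity above yields the announced sharp asymptotic for $1/\lambda_\ve=\mathbb E_{\nu_\ve}[\tau_{D^c}]$.

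It remains to upgrade the estimate from $\nu_\ve$ to a uniform estimate over $x$ in an arbitrary compact $K\subset D$. For this, two classical ingredients suffice: Freidlin--Wentzell theory applied to the downward gradient flow $\dot y=-\nabla f(y)$ shows that the process started at $x\in K$ enters any prescribed neighborhood of $x_0$ in bounded deterministic time with probability $1-O(\ve)$, so that from there it essentially behaves as if started from the QSD; and the spectral gap between $\lambda_\ve$ (exponentially small in $\ve$) and the second Dirichlet eigenvalue of $L_\ve$ (bounded below by a constant) ensures that the conditioned process relaxes to $\nu_\ve$ well before exit. Together they give $\mathbb E_x[\tau_{D^c}]=(1+O(\ve))/\lambda_\ve$ uniformly on $K$. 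The main obstacle is the sharp boundary expansion of $\phi_\ve$ on the entirety of $\pa D$, without any Morse assumption on $f|_{\pa D}$: this is precisely the contribution of~\cite{Day2,Day4}, whereas the remaining steps (Laplace method, Freidlin--Wentzell, spectral-gap relaxation) are by now standard.
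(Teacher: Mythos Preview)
Your Green's identity
\[
\lambda_\ve\int_D\phi_\ve\,e^{-f/\ve}\,dy\;=\;-\ve\int_{\pa D}\pa_{\mbf n}\phi_\ve\,e^{-f/\ve}\,d\sigma
\]
is correct, and the overall architecture (bulk Laplace + boundary flux + leveling) is reasonable. The gap is in what you attribute to~\cite{Day2,Day4}. Those papers supply only what the present paper records in Proposition~\ref{day_res}: the leveling estimate $\lambda_\ve\,\mathbb E_x[\tau_{D^c}]=1+O(e^{-c/\ve})$ uniformly on compacts, and the bound $h_{C,D^c}\ge 1-e^{-c/\ve}$ on compacts. They do \emph{not} give the sharp pointwise boundary expansion
\[
-\ve\,\pa_{\mbf n}\phi_\ve(\sigma)=\pa_{\mbf n}f(\sigma)\,(1+O(\ve))\quad\text{uniformly on }\pa D,
\]
which is the entire crux of the problem. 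If that estimate were available off the shelf, the paper's Sections~2 and~4 would be superfluous. Similarly, your bulk step needs $\phi_\ve=1+O(\ve)$ in a fixed neighborhood of $x_0$; Day's results give $\phi_\ve\to 1$ on compacts, but not with an $O(\ve)$ remainder, so your Laplace estimate with $O(\ve)$ error is also not justified as stated.

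The paper's proof is designed precisely to avoid these two unproven pointwise estimates. It replaces $\phi_\ve$ by the equilibrium potential $h_{C,D^c}$, which is \emph{exactly} $1$ on a ball $C$ around $x_0$, so the bulk Laplace integral is immediate with error $O(\ve)$. For the boundary contribution, instead of a pointwise normal-derivative estimate, it uses the variational principle for ${\rm cap}_C(D^c)$: an explicit trial function $h(x',x_d)=g(x_d)$ built in the eikonal coordinates of Section~2 gives the upper bound, and a one-dimensional minimization in the $x_d$ variable (together with Day's lower bound on $h_{C,D^c}(x',\delta)$) gives the matching lower bound. The passage from $x_0$ to arbitrary $x\in K$ is then a direct consequence of Day's leveling result, with no need for Freidlin--Wentzell or spectral-gap arguments. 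In short, the step you flag as ``precisely the contribution of~\cite{Day2,Day4}'' is in fact the contribution of the present paper, obtained by a variational capacity estimate rather than a pointwise eigenfunction expansion.
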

\begin{remark}
Under some assumption on $f|_{\pa D}$, an asymptotic estimate of the term $\displaystyle \int_{\pa D}\pa_{\mbf n}f(\sigma) e^{-\frac{1}{\ve}f(\sigma)} d\sigma$ in the limit $\ve \to 0$ can be obtained with Laplace's method. Two exemples are provided in~\eqref{eq.lap11} and~\eqref{eq.lap12} below. 
\end{remark}
\begin{remark}
The proof of Theorem~\ref{th.main} does not allow to obtain  a full asymptotic expansion in $\ve$ of  the remainder term $O(\ve)$. However, we expect   this asymptotic expansion to hold. 
\end{remark}
\noindent
As a consequence of
Theorem~\ref{th.main}, one obtains
an estimate in the limit $\ve \to 0$ on the first
eigenvalue of the  infinitesimal generator of the diffusion~\eqref{eq.langevin}
\begin{equation}\label{GL}
L_{\ve}=-\ve \Delta +\nabla f\cdot \nabla.
\end{equation}
with homogeneous Dirichlet boundary conditions on $\pa D$.
Let us recall that since $D\subset \mathbb R^d$ is a $C^{\infty}$ bounded open set and $f\in C^{\infty}(\mathbb R^d,\mathbb R)$,  the operator 
$L_{\ve}$ 
 with domain $H^2(D)\cap H^1_0(D)$ on $L^2(D, e^{-\frac{f(x)}{\ve}}dx)$ is self-adjoint, positive and has compact resolvent, where 
  $L^2(D, e^{-\frac{f(x)}{\ve}}dx)$ is the completion of the space $ C^{\infty}(\overline{D})$ for the norm 
$$\phi \in  C^{\infty}(\overline{D})\mapsto \int_{D} \vert \phi\vert^2e^{-\frac{1}{\ve} f}.$$
 Its smallest eigenvalue is denoted by $\lambda_{\ve}>0$.  Theorem~\ref{th.main} together with \cite[Corollary 1]{Day2} (which is recalled in Section \textbf{2.2} below) imply the following estimates on $\lambda_{\ve}$.

\begin{corollary}\label{co.lambda}
Let us assume that the assumption \textbf{[H-D]} holds. Then,   in the limit $\ve \to 0$:
$$\lambda_{\ve}=\frac{ \sqrt{{\rm det}\,\, \Hess f(x_0) } \displaystyle \int_{\pa D}\pa_{\mbf n}f(\sigma) e^{-\frac{1}{\ve}f(\sigma)} d\sigma  }{   (2\pi\ve)^{\frac d2} }\, e^{\frac{1}{\ve}f(x_0)} \big  (1+O(\ve) \big ).$$
\end{corollary}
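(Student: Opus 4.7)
The plan is to deduce Corollary~\ref{co.lambda} as an essentially immediate consequence of Theorem~\ref{th.main}, by routing through the cited result~\cite[Corollary 1]{Day2} (which the introduction announces will be recalled in Section~2.2). I would proceed in two short steps.

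First, I would invoke Day's result, which under assumption \textbf{[H-D]} provides the identity
$$\lambda_{\ve}\, \mathbb E_{x}[\tau_{D^c}] = 1 + O(\ve), \qquad \ve \to 0,$$
uniformly in $x$ on any compact subset $K\subset D$. The mechanism behind such an identity is the spectral decomposition of the Dirichlet semigroup generated by $L_\ve$: because of the exponentially large gap separating $\lambda_\ve$ from the rest of the spectrum in the metastable regime, the tail distribution $\mathbb P_{x}(\tau_{D^c}>t)$ is asymptotically exponential with rate $\lambda_\ve$, and the principal eigenfunction (suitably normalized) is uniformly close to $1$ on $K$. I take this input as given.

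Second, since $\lambda_\ve$ does not depend on $x$, I fix any $x \in K$ and substitute the sharp formula provided by Theorem~\ref{th.main} for $\mathbb E_x[\tau_{D^c}]$ into Day's identity, obtaining
$$\lambda_\ve = \frac{1+O(\ve)}{\mathbb E_x[\tau_{D^c}]} = \frac{\sqrt{\det \Hess f(x_0)}\,\int_{\pa D}\pa_{\mbf n}f(\sigma)\, e^{-\frac{1}{\ve}f(\sigma)}\, d\sigma}{(2\pi\ve)^{\frac d2}}\, e^{\frac{1}{\ve}f(x_0)}\,(1+O(\ve)).$$
Because both error factors are multiplicative of the form $(1+O(\ve))$ with uniform remainders on $K$, inversion and multiplication preserve the bound, and the announced asymptotic for $\lambda_\ve$ drops out directly.

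No genuine obstacle arises at this stage: the corollary is pure bookkeeping once Theorem~\ref{th.main} and~\cite[Corollary 1]{Day2} are in hand. All the mathematical content lies upstream in Theorem~\ref{th.main} itself, and the only minor care required here is to check that the uniformity in $x$ of both $O(\ve)$ errors on compacts $K \subset D$ is compatible, which it is by construction of the two ingredients.
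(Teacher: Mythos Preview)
Your proposal is correct and follows essentially the same route as the paper: the corollary is obtained by combining Theorem~\ref{th.main} with Day's relation $\lambda_\ve\,\mathbb E_x[\tau_{D^c}]=1+o(1)$ (the paper's Proposition~\ref{day_res}). The only minor discrepancy is that you quote Day's error term as $O(\ve)$, whereas the result actually yields the sharper $O(e^{-c/\ve})$; either bound suffices here since the dominant error comes from Theorem~\ref{th.main}.
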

Let us mention that sharp estimates of the smallest eigenvalues of $L_{\ve}$ have been obtained in \cite{HeNi1, LeNi, DLLN, DLLN1} in the Dirichlet case and in  \cite{Lep} in the Neumann case  when  $f|_{\pa D}$ is a Morse function (i.e. when all the critical points of $f|_{\pa D}$ are non degenerate).  When $ D=\mathbb R^d$, we refer to~\cite{BEGK,BGK,HKN,michel2017small,landim2017dirichlet}. Corollary~\ref{co.lambda} gives a general formula on the asymptotic estimate of $\lambda_{\ve}$ which allows in particular, under the assumption \textbf{[H-D]},  to deal with the case when $f|_{\pa D}$ is not a Morse function. For example,  direct consequences of Theorem~\ref{th.main} are the following: 
\begin{itemize}
\item Let us assume that $f$ is constant on $\pa D$: $f(z)\equiv f_1$ for all $z\in \pa D$. Then, for any compact set $K\subset D$, it holds:
\begin{equation}\label{eq.lap11}
\mathbb E_x[ \tau_{D^c}]=\frac{ (2\pi\ve)^{\frac d2}   }{  \sqrt{{\rm det}\,\Hess f(x_0) }  \displaystyle\int_{\pa D}\pa_{\mbf n}f(\sigma)  d\sigma  }\, e^{\frac{1}{\ve}(f_1-f(x_0))} \big  (1+O(\ve) \big ),
\end{equation}
in the limit $\ve \to 0$ and uniformly with respect to  $x \in  K$. Moreover, one has in the limit $\ve \to 0$  
$$\lambda_\ve=\frac{   \sqrt{{\rm det}\,\Hess f(x_0) } \displaystyle \int_{\pa D}\pa_{\mbf n}f(\sigma)  d\sigma  }{ (2\pi\ve)^{\frac d2}  }\, e^{-\frac{1}{\ve}(f_1-f(x_0))} \big  (1+O(\ve) \big ).$$
\item Let us assume that there exists $k\in \mathbb N^*$ such that   $\argmin_{\pa D} f=\{z_1,...,z_k\}$ and for all $j\in \{1,...,k\}$, $z_j$ is a non degenerate critical point of $f|_{\pa D}$. Then, for any compact set $K\subset D$, it holds:
\begin{equation}\label{eq.lap12}
\mathbb E_x[ \tau_{D^c}]=\sqrt{2\pi \ve}\, \sum_{j=1}^{k} \frac{\sqrt{{\rm det}\,\Hess f|_{\pa D}(z_j)} }{ \pa_{\mbf n}f(z_j) \sqrt{{\rm det}\,\Hess f(x_0) }} \, e^{\frac{1}{\ve}(f(z_1)-f(x_0))} \big  (1+O(\ve) \big )
\end{equation}
in the limit $\ve \to 0$ and  uniformly with respect to  $x \in  K$. Moreover, one has in the limit $\ve \to 0$
$$\lambda_{\ve}=\frac{1}{\sqrt{2\pi \ve}}\, \sum_{j=1}^{k} \frac{\pa_{\mbf n}f(z_j) \sqrt{{\rm det}\,\Hess f(x_0) }  }{ \sqrt{{\rm det}\,\Hess f|_{\pa D}(z_j)}} \, e^{-\frac{1}{\ve}(f(z_1)-f(x_0))} \big  (1+O(\ve) \big ).$$
In particular,  if $f|_{\pa D}$ is a Morse function, one recovers the results of \cite{HeNi1, DLLN, DLLN1} on the first eigenvalue
$\lambda_\ve$.
\end{itemize}

\section{Change of coordinates in a neighborhood of $\pa D$}

In this section, one constructs coordinates which will be useful for the computations in Section~\ref{sec.4}. The construction of these coordinates     heavily depends on the assumption $\pa_{\mbf n}f>0$ on~$\pa D$.

In all this section, we assume that the assumption \textbf{[H-D]} is satisfied.

\subsection{Eikonal solution near $\pa D$}
 
\label{sec.21} 

Let us start with the following lemma. 
\begin{lemma} \label{eikonal-boundary} Let us assume that the assumption  \textbf{[H-D]} holds.
Then, there exists a neighborhood of $\partial D$ in $\overline D$, denoted by $V_{\partial D}$, such that there exists $\Phi \in C^{\infty}(V_{\partial D},\mathbb R)$ satisfying 
\begin{equation} \label{eikonalequation-boundary}
\left\{
\begin{aligned}
\vert  \nabla \Phi \vert^2 &=  \vert  \nabla f \vert^2   \ {\rm in \ }  D \cap V_{\partial D}  \\ 
\Phi &= f \ {\rm on \ } \partial D  \\
\pa_{\mbf n} \Phi&=-\pa_{\mbf n} f \ {\rm on \ } \partial D .\end{aligned}
\right.
\end{equation}
Moreover, one has the following uniqueness results: if $\tilde \Phi$ is
a $C^{\infty}$ real valued function defined on a neighborhood
$\tilde{V}$ of $\partial D$ satisfying~\eqref{eikonalequation-boundary}, then $\tilde \Phi =\Phi$ on
$\tilde{V} \cap V_{\partial D}$. Finally, $V_{\pa D}$ can be chosen such that $\Phi>f$ on $V_{\pa D}\setminus \pa D$ and $\nabla (\Phi-f)\neq 0$ on $V_{\pa D}$.
\end{lemma}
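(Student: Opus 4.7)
The plan is to recognize~\eqref{eikonalequation-boundary} as a noncharacteristic Cauchy problem for the first-order Hamilton--Jacobi equation
$$H(x,\nabla \Phi) = 0, \qquad H(x,p) = |p|^2 - |\nabla f(x)|^2,$$
and to invoke the classical local existence and uniqueness theorem for such problems (see for instance Evans, \emph{Partial Differential Equations}, Chapter~3, or H\"ormander, \emph{Analysis of Linear Partial Differential Operators~I}, Chapter~8).

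The first step is to identify $\nabla \Phi$ on $\partial D$. Since $\Phi|_{\partial D} = f|_{\partial D}$, the tangential component of $\nabla \Phi$ on $\partial D$ agrees with that of $\nabla f$, and combined with the Neumann datum $\partial_{\mbf n} \Phi = -\partial_{\mbf n} f$ this forces
$$\nabla \Phi|_{\partial D} = p_0 := \nabla f - 2(\partial_{\mbf n} f)\,\mbf n.$$
A direct computation using $|\mbf n| = 1$ gives $|p_0|^2 = |\nabla f|^2$, so the Hamilton--Jacobi equation holds pointwise on $\partial D$. The noncharacteristic condition reads $\partial_p H(x, p_0) \cdot \mbf n \neq 0$ on $\partial D$; here $\partial_p H(x, p_0) = 2p_0$ and $p_0 \cdot \mbf n = -\partial_{\mbf n} f < 0$ by~\textbf{[H-D]}. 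Integrating the Hamiltonian characteristic system from the initial surface $\{(x, p_0(x)) : x \in \partial D\}$ then produces a smooth solution $\Phi$ in a one-sided neighborhood $V_{\partial D}$ of $\partial D$ in $\overline{D}$: the bicharacteristics enter $D$ because their initial velocity $2p_0$ has inward normal component $-2\partial_{\mbf n} f < 0$. The uniqueness statement follows from the same result, since the characteristics are determined by the Cauchy data.

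Finally, for the remaining properties I would examine $\psi := \Phi - f$. On $\partial D$, $\psi = 0$ and $\partial_{\mbf n} \psi = -2\partial_{\mbf n} f < 0$; since $\mbf n$ is the outward unit normal to $D$, $\psi$ strictly increases when moving inward from $\partial D$, hence $\Phi > f$ on $V_{\partial D} \setminus \partial D$ after shrinking $V_{\partial D}$. Likewise, $\nabla \psi = -2(\partial_{\mbf n} f)\,\mbf n$ on $\partial D$ is nonzero, so by continuity $\nabla \psi \neq 0$ throughout $V_{\partial D}$ after a further shrinking. The only mildly delicate point of the argument is verifying that the Hamiltonian flow indeed sweeps out a genuine one-sided neighborhood of $\partial D$ inside $\overline{D}$, but this is immediate from the inward direction of the initial velocity together with the implicit function theorem; no serious obstacle arises, since the hypothesis $\partial_{\mbf n} f > 0$ is precisely what renders the Cauchy problem non-degenerate and classical.
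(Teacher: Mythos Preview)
Your proposal is correct and follows essentially the same approach as the paper: both recognize~\eqref{eikonalequation-boundary} as a noncharacteristic Cauchy problem for a first-order Hamilton--Jacobi equation, invoke the classical local existence and uniqueness theorem (the paper cites \cite{DiSj} and \cite[Section~3.2]{Eva}), and deduce the final properties from $\partial_{\mbf n}(\Phi - f) = -2\partial_{\mbf n} f < 0$. The only cosmetic difference is that the paper works locally near each boundary point and then patches via compactness of $\partial D$, whereas you describe the characteristic flow globally from $\partial D$; these are equivalent here.
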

\begin{proof} 
Let $z\in \partial D$. Using \cite[Theorem 1.5]{DiSj} or \cite[Section 3.2]{Eva} and thanks to the fact that $\pa_{\mbf n}f>0$ on $\pa D$, there exists a neighborhood of $z$ in $\overline D$, denoted by $\mathcal V_z$, such that there exists $\Phi \in C^{\infty}(\mathcal V_z,\mathbb R)$ satisfying 
$$
\left\{
\begin{aligned}
\vert  \nabla \Phi \vert^2 &=  \vert  \nabla f \vert^2   \ {\rm in \ }  D \cap \mathcal V_z  \\ 
\Phi &= f \ {\rm on \ } \partial D \cap \mathcal V_z  \\
\pa_{\mbf n} \Phi&=-\pa_{\mbf n} f \ {\rm on \ } \partial D \cap \mathcal V_z. \end{aligned}
\right.$$
Moreover, $\mathcal V_z$ can be chosen such that the following
uniqueness result holds: if a function $\tilde \Phi \in C^{\infty}(\mathcal V_z,\mathbb R)$
satisfies the previous equalities, then $\tilde \Phi =\Phi$ on $\mathcal V_z$. Now, one concludes using the fact that $\partial D$ is compact and can thus it can be covered by a finite number of these neighborhoods $(\mathcal V_z)_{z\in \partial D}$. Finally, since $\pa_{\mbf n}(\Phi-f)=-2\pa_{\mbf n} f<0$ on $\pa D$, $V_{\pa D}$ can be chosen such that $\Phi>f$ on $V_{\pa D}\setminus \pa D$ and $\nabla (\Phi-f)\neq 0$ on $V_{\pa D}$.\end{proof} 
 
\subsection{Definition of the coordinate $x_d$}

 In this section, one defines coordinates near $\pa D$ which will be convenient in the upcoming computations in Section \textbf{3}. 
Let us now consider  $\Phi$ the solution to~\eqref{eikonalequation-boundary} on the neighborhood $V_{\pa D}$ of $\pa D$ as introduced in Lemma~\ref{eikonal-boundary}. Let us define on $V_{\pa D}$:
\begin{equation}\label{fmoinsfplus}
f_+=\frac{f+\Phi}{2} \text{ and } f_-=\frac{\Phi-f}{2}.
\end{equation}
Using Lemma~\ref{eikonal-boundary}, it holds on $V_{\pa D}\setminus \pa D$:  $f_->0$ and one has on $V_{\pa D}$:
\begin{equation}\label{fmoinsfplus2}
 \nabla f_-\cdot \nabla f_+=0.
\end{equation}
Let us now consider $\delta >0$ such that 
\begin{equation*} 
V_\delta:=\{x \in \overline D, \, 0 \le f_-(x) \le \delta \}
\subset V_{\partial D}.
\end{equation*}
For any $x \in V_\delta$, the dynamics
\begin{equation}\label{eq:flow_x'}
\left\{
\begin{aligned}
\gamma_{x}'(t)&=-\frac{\nabla f_-}{|\nabla f_-|^2}(\gamma_{x}(t))\\
\gamma_x(0)&=x
\end{aligned}
\right.
\end{equation}
is well defined (since from Lemma~\ref{eikonal-boundary}, one has on $V_{\pa D}$,  $\nabla f_-\neq 0$) and is such that $\gamma_x(t_x)\in \partial D$, where $t_x=\inf\{t, \, \gamma_x(t) \in \pa D\}$. 
This is indeed a consequence of the fact that $\frac{d}{dt}
f_-(\gamma_x(t)) =-1 < 0$ on $[0,t_x)$. 
\begin{proposition}\label{de.phi}
The application
$$\Theta:\left\{ 
\begin{aligned}
V_\delta & \to  \partial D \times [0,\delta]\\
x & \mapsto (\gamma_x(t_x),t_x)
\end{aligned}
\right.
$$
defines a $C^\infty$ diffeomorphism.  The inverse application of $\Theta$    is
$$\Psi: (z,x_d) \in \partial D \times [0,\delta] \mapsto \gamma_{z}(-x_d).$$
\end{proposition}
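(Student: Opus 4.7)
The plan is to exploit the crucial identity $\frac{d}{dt} f_-(\gamma_x(t)) = \nabla f_-(\gamma_x(t)) \cdot \gamma_x'(t) = -1$, which comes directly from the definition of the vector field in~\eqref{eq:flow_x'}. This means $f_-$ plays the role of a signed arclength along the integral curves, and turns everything into a straightforward tubular neighborhood construction.

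First I would show that $\Theta$ is well defined. Integrating the identity above gives $f_-(\gamma_x(t)) = f_-(x) - t$ as long as $\gamma_x(t) \in V_{\pa D}$. Since $f_- \ge 0$ on $V_{\pa D}$ with equality precisely on $\pa D$ (because $\Phi = f$ there by Lemma~\ref{eikonal-boundary}), it follows that $\gamma_x$ remains in $V_\delta$ on $[0,t_x]$ and that $t_x = f_-(x)$. In particular, $t_x \in [0,\delta]$ for $x \in V_\delta$, and the hitting time $x \mapsto t_x = f_-(x)$ is a smooth function on $V_\delta$. Smoothness of $x \mapsto \gamma_x(t_x)$ then follows from the smooth dependence of the flow of the smooth vector field $-\nabla f_-/|\nabla f_-|^2$ (smooth thanks to $\nabla f_- \neq 0$ on $V_{\pa D}$, from Lemma~\ref{eikonal-boundary}) on both time and initial condition.

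Next I would show that $\Psi$ is well defined and smooth on $\pa D \times [0,\delta]$. For $z \in \pa D$, running the same ODE \emph{backwards} from $z$ gives a curve along which $f_-$ \emph{increases} at unit rate: $f_-(\gamma_z(-s)) = s$ for $s \in [0,\delta]$. So $\gamma_z(-x_d) \in V_\delta$, and smoothness of $\Psi$ again follows from the smooth dependence of the flow on $(z,x_d)$.

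The verification that $\Psi = \Theta^{-1}$ is then a direct consequence of uniqueness for ODEs. On the one hand, for $x \in V_\delta$, the curve $s \mapsto \gamma_{\gamma_x(t_x)}(-s)$ and the curve $s \mapsto \gamma_x(t_x - s)$ solve the same Cauchy problem, so they coincide; evaluating at $s = t_x$ gives $\Psi(\Theta(x)) = \gamma_x(0) = x$. On the other hand, for $(z,x_d) \in \pa D \times [0,\delta]$, setting $y = \gamma_z(-x_d)$, the curves $t \mapsto \gamma_y(t)$ and $t \mapsto \gamma_z(t - x_d)$ again solve the same Cauchy problem; since $f_-(\gamma_y(t)) = x_d - t$, the hitting time is $t_y = x_d$ and $\gamma_y(t_y) = \gamma_z(0) = z$, so $\Theta(\Psi(z,x_d)) = (z, x_d)$. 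The only real content is the identity $\frac{d}{dt} f_-(\gamma_x(t)) = -1$ and the normalization $f_- = 0$ on $\pa D$; the main conceptual point — and the place where one must be careful — is to confirm that the flow stays inside $V_{\pa D}$ throughout, which is precisely guaranteed by the bound $f_-(\gamma_x(t)) \in [0,\delta]$ for all admissible $t$.
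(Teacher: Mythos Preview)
Your proof is correct and is exactly the natural argument one would give. Note, however, that the paper does not actually supply a proof of this proposition: it is stated and immediately followed by a remark pointing to~\cite{HeNi1} and~\cite{DLLN}, and then the text moves on to consequences of the construction. The only ingredients the paper records explicitly are the identity $\frac{d}{dt}f_-(\gamma_x(t))=-1$ (stated just before the proposition) and, later, the fact that $x_d(x)=f_-(x)$ ``by construction''; your argument simply makes explicit the straightforward flow-box/tubular-neighborhood reasoning that underlies those assertions. There is nothing to compare against, and your write-up fills the gap cleanly.
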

\begin{remark}
Let us mention that the application $\Psi$ has been introduced locally in~\cite{HeNi1} and have also been used in~\cite{DLLN}. 
\end{remark}
Let us now give some properties of the  function $\Psi$ which are used in the sequel. 
Using the fact that $\Psi(z,x_d)=\gamma_{z}(-x_d)$, one obtains that for all $z\in \pa D$ and $x_d\in [0,\delta]$:
\begin{equation}\label{eq.oe1}
\nabla_{x_d}\Psi(z,x_d)=\frac{d}{dx_d}\gamma_{z}(-x_d)=\frac{\nabla f_-(z,x_d)}{\vert \nabla f_-(z,x_d)\vert^2}.
\end{equation}
Thus, one has for all $z\in \pa D$:
\begin{equation}\label{eq.oe}
\nabla_{x_d}\Psi(z,0)=-\frac{1}{\pa_{\mbf n}f(z,0)}\mbf{n},
\end{equation}
where $\mbf{n}$ is the unit outward normal to $\pa D$.  Moreover, using  the fact that $\Psi(z,0)=(z,0)$ for all $z\in \pa D$ and $\mbf{n}=-\frac{\nabla x_d}{\vert \nabla x_d\vert}$ together with \eqref{eq.oe}, it holds for all $u\in T_{z}\pa D$ and for all $v\in \mathbb R$:
\begin{equation}\label{eq.ii}
d \Psi_{(z,0)}(u+v\mbf{n})=u+\frac{v}{\pa_{\mbf n}f(z,0)}\mbf{n},
\end{equation}
and thus:
\begin{equation}\label{eq.ii2}
{\rm jac}\, \Psi(z,0)=\frac{1}{\pa_{\mbf n}f(z,0)},
\end{equation}
where ${\rm jac}\, \Psi$ is the determinant of the jacobian matrix of $\Psi$. 
Finally,  by construction (since $\frac{d}{dt}
f_-(\gamma_x(t)) =-1$) $x_d(x)=f_-(x)$ and one has $\{x_{d}=0\} = \partial
D$, $\{x_{d}>0\} = D \cap V_\delta$ and
\begin{equation}\label{Cdelta}
V_\delta =\big \{x=\Psi(z,x_d) \in \overline D, \, 0 \le x_d \le \delta \big\}.
\end{equation}
A schematic representation of $V_\delta $ is given in Figure~\ref{fig:OmegaC}.

\subsection{ Metric associated with the change of variable $x=\Psi(z,x_d)$}
 \label{sec.coo}
Let us consider $(\rho_k)_{ k\in \{1,...,N\} }\in C^{\infty}(\pa D,[0,1])^N$ a partition of unity of $\pa D$:
 \begin{equation}\label{partition}
 \text{for all } y\in \pa D, \  \sum_{k=1}^N \rho_k(y)=1
\end{equation}
 such that for all $k\in \{1,...,N\}$,  there exist smooth coordinates $x'\in \mathbb R^{d-1}$ defined by a $C^\infty$ mapping
 \begin{equation}\label{1}
\Gamma_{k}: \left\{ 
\begin{aligned}
\supp \rho_k  &\to \mathbb R^{d-1}\\
z & \mapsto x'
\end{aligned}.
\right.
\end{equation}
The  coordinates $x'\in \Gamma_k(\supp \rho_k)$ are then extended
in a neighborhood of $\supp \rho_k$ in~$ D$, as constant along the integral
curves of $\gamma'(t)=\frac{\nabla f_-}{|\nabla f_-|^2}(\gamma(t))$, for
$t \in [0,\delta]$. 
The function $
 x\mapsto (x',x_d)
$
 (where, we recall, $x_d(x)=f_-(x)$) thus defines a smooth system
of coordinates in a neighborhood $V_k$ of~$\supp \rho_k$ in~$\overline D$. Let us define 
\begin{equation}\label{chg}
 \text{ for all } (x',x_d) \in \Gamma_k(\supp \rho_k)\times  [0,\delta], \ \ \Upsilon_k(x',x_d):= \Psi\big ( \Gamma_k^{-1}(x'),x_d\big )
\end{equation}
where $\Psi$ is introduced in Proposition~\ref{de.phi}. Notice that it holds for all $(x',x_d) \in \Gamma_k(\supp \rho_k)\times  [0,\delta]$,
\begin{equation}\label{chg2}
  {\rm  Jac}\, \Upsilon_k(x',x_d):= {\rm  Jac}\, \Psi\big ( \Gamma_k^{-1}(x'),x_d\big ) \begin{pmatrix}
{\rm  Jac} \, \Gamma_k^{-1}(x') & 0 \\
0 & 1
\end{pmatrix}.
\end{equation}
where ${\rm  Jac}\, \Upsilon_k$  is the jacobian matrix of~$\Upsilon_k$.  
 In this system of coordinates,
the metric tensor $G_k(x',x_d)=\, ^t{\rm  Jac}\, \Upsilon_k(x',x_d) \, {\rm  Jac}\, \Upsilon_k(x',x_d) $ writes:
\begin{align}\label{G}
G_k:(x',x_d)\in \Gamma_k(\supp \rho_k)\times  [0,\delta]\mapsto \begin{pmatrix}
\tilde G_k(x',x_d) & 0 \\
0 & (G_k)_{dd}(x',x_d)
\end{pmatrix}
\end{align}
where $\tilde G_k$ is a $C^\infty$ square matrix of size $d-1$ and $(G_k)_{dd}$ is a $C^\infty$ positive function. Let us prove~\eqref{G}. Let us denote by $x'=(x_1',...,x_{d-1}')$. Since by construction, for all $(x',x_d) \in \Gamma_k(\supp \rho_k)\times  [0,\delta]$,   $f_-(\Upsilon_k(x',x_d))=x_d$, one has:
\begin{equation}\label{G-1}
\forall  j\in \{1,...,d-1\},\ \nabla_{x_j'} \Upsilon_k(x',x_d)\cdot \nabla f_-(\Upsilon_k(x',x_d))=0.
\end{equation}
Moreover, from~\eqref{chg2} and~\eqref{eq.oe1}, one has for all $(x',x_d) \in \Gamma_k(\supp \rho_k)\times  [0,\delta]$,
\begin{equation}\label{G-2}
\nabla_{x_d} \Upsilon_k(x',x_d)=\frac{\nabla f_-(\Upsilon_k(x',x_d))}{\vert \nabla f_-(\Upsilon_k(x',x_d))\vert^2}.
\end{equation} 
Then,  from~\eqref{G-1} and~\eqref{G-2}, it holds
$$\forall  j\in \{1,...,d-1\},\ (G_k)_{j,d}=\nabla_{x_j'} \Upsilon_k\cdot \nabla_{x_d} \Upsilon_k=0.$$
This proves~\eqref{G}. 
 Furthermore,  from~\eqref{eq.ii} and~\eqref{chg2}, one has:
\begin{equation}\label{eq.Gdd}
\text{for all } \, x' \in \Gamma_k(\supp \rho_k),\ (G_k)_{dd}(x',0)=\frac{1}{\pa_{\mbf n}f(x',0)^2}.
\end{equation}
Finally, a consequence of~\eqref{fmoinsfplus2} is that $\frac{d}{dt}
f_{+}(\gamma_x(t))=0$, where $\gamma_x$
satisfies~\eqref{eq:flow_x'} and thus, in the system of coordinates $(x',x_d)$,
the functions $f_{+}$ and~$f$ write:
\begin{align}\label{ff}
f_{+}(x',x_d)=f_{+}(x',0) \text{ and } f(x',x_d)=f_{+}(x',0)-x_d,
\end{align}
where with a slight abuse of notation, one denotes $f(\Upsilon_k(x',x_d))$ (resp. $f_+(\Upsilon_k(x',x_d))$)  by $f(x',x_d)$ (resp. by $f_+(x',x_d)$).

\section{Potential theory  and mean exit time of $D$}

\subsection{Potential theory}

Let us recall the main results from Potential theory which are used in this work. These results can be found  for instance in~\cite{bovier2016metastability}. Let us denote by $C=B(x_0,r_0)\subset D$ a closed  ball centred at $x_0$ and of radius $r_0>0$ chosen such that $B(x_0,r_0)\cap V_{\delta}=\emptyset$ where $V_{\delta}$ is given by~\eqref{Cdelta} (see Figure~\ref{fig:OmegaC}). Let~$h_{C, D^c}$  be 
 the unique weak solution in $H^1(\mathbb R^d)$ of the elliptic boundary value problem
$$
\left\{
\begin{aligned}
L_\ve \, v  &=  0    \ {\rm on \ }  D\setminus C  \\ 
v&= 0 \ {\rm on \ }   D^c \\
 v&= 1 \ {\rm on \ }  C,\\
\end{aligned}
\right.
$$
The function $h_{C, D^c}$ is called the \textit{equilibrium potential} of the \textit{capacitor} $(C, D^c)$ (as denoted in \cite[Section 2]{BEGK}). 
From elliptic regularity estimates  (see for instance~\cite[Theorem 5, Section 6.3]{Eva}), the function $h_{C, D^c}$ belongs to $C^{\infty}( \overline{D\setminus C})$. Therefore, it holds 
$$h_{C, D^c}\in H^1(D) \cap C^{\infty}( \overline{D\setminus C}).$$
Using the Dynkin's formula (see for instance \cite[Theorem 11.2]{karlin1981second}), one has for all $x\in 
\overline D$, 
\begin{equation}\label{pxc}
h_{C, D^c}(x)=\mathbb P_x[\tau_C<\tau_{D^c}],
\end{equation}
where $\tau_C =\inf \{ t\geq 0 | X_t \in C     \}$ and $\tau_{D^c}$ is defined by~\eqref{eq.tauD}.  Let us denote by $G_D$  be the Green function of $L_\ve$ associated with homogeneous Dirichlet boundary conditions on~$\pa D$. 
The   \textit{equilibrium measure} $e_{C, D^c}$  associated with $(C, D^c)$  (see \cite[Section 2]{BEGK} and more precisely the equation (2.10) there) is defined as the unique measure on $\pa C$ such that 
$$h_{C, D^c}(x)=\int_{\pa C} G_D(x,y)e_{C, D^c}(dy).$$
From \cite[Section 2]{BEGK} (see equation (2.27) there), one has the following relation:
\begin{equation}\label{rela}
\int_{\pa C} \mathbb E_z[\tau_{D^c}] \, e^{-\frac{1}{\ve}f(z)} \, e_{C, D^c}(dz) =\int_{D}e^{-\frac{1}{\ve}f(x)} \, h_{C, D^c}(x) \, dx.
\end{equation}
  Let us now define, as in \cite[Section 2]{BEGK} (see equation (2.13) there), the \textit{capacity} associated with $(C, D^c)$:
\begin{equation}\label{Cap}
{\rm cap}_C(D^c)=\int_{\pa C} e^{-\frac{1}{\ve}f(z)} e_{C, D^c}(dz).
\end{equation}


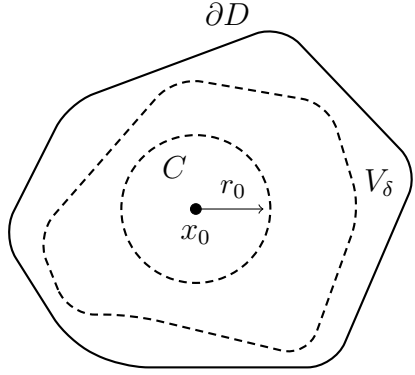
\begin{figure}[h!]
\begin{center}
\begin{tikzpicture}[scale=0.7]
\tikzstyle{vertex}=[draw,circle,fill=black,minimum size=4pt,inner sep=0pt]
\tikzstyle{ball}=[circle, dashed, minimum size=1cm, draw]
\tikzstyle{point}=[circle, fill, minimum size=.01cm, draw]
\draw [thick, rounded corners=10pt] (1,0.5) -- (-0.25,2.5) -- (1,5) -- (5,6.5) -- (7.6,3.75) -- (6,0) -- (4,0) -- (2,0) --cycle;
 
\draw [ thick,densely dashed, rounded corners=10pt] (1,1) -- (0.4,2.5) -- (3,5.5) -- (5.9,5) --(6.5,3) -- (5.5,0.2) -- (2.2,1) --cycle;

\draw [thick, densely dashed] (3.4,3) circle (1.4);
 \draw[->] (3.4,3) -- (4.67,3);
      \draw  (4.1,3.38) node[]{\small{$r_0$}};
     \draw  (6.85,3.5) node[]{$V_\delta$};
          \draw  (4,6.7) node[]{$\pa D$};
          \draw  (3,3.8) node[]{$C$};
\draw (3.4,3) node[vertex,label=south: {$x_0$}](v){};

\end{tikzpicture}

\caption{Schematic representation  in dimension $2$ of the domain $D$, $V_\delta$ (see~\eqref{Cdelta}) and of the closed  ball $C=B(x_0,r_0)$. }
 \label{fig:OmegaC}

\end{center}
\end{figure}
 
\subsection{A first asymptotic estimate on the mean exit time of $D$}

The following results from \cite[Corollary 1]{Day2} and \cite[Theorem 2]{Day4} will be useful in the sequel. 
\begin{proposition}\label{day_res}
Let us assume that the assumption  \textbf{[H-D]} holds. Let $K\subset D$ be a compact set. Then, there exists $c>0$ such that it holds in the limit $\ve \to 0$ and uniformly with respect to  $x \in  K$:
$$\lambda_{\ve} \mathbb E_x[\tau_{D^c}]=1 +O(e^{-\frac{c}{\ve}}),$$
and
$$h_{C, D^c}(x)\ge 1-e^{-\frac{c}{\ve}},$$
where, we recall, for all $x\in \overline D$, $h_{C, D^c}(x)=\mathbb P_x[\tau_C<\tau_{D^c}]$ (see~\eqref{pxc}). 
\end{proposition}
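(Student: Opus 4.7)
The plan is to reconstruct the two estimates of \cite{Day2,Day4}, both of which rest on the Freidlin--Wentzell large deviation principle for \eqref{eq.langevin} combined with geometric information extracted from \textbf{[H-D]}. The crucial geometric fact is that under \textbf{[H-D]} the deterministic flow $\dot\gamma=-\nabla f(\gamma)$ admits $x_0$ as its unique attractor in $\overline D$: starting from any $x\in\overline D$, the trajectory stays in $D$ for all positive times (since $\pa_{\mbf n}f>0$ on $\pa D$ forces the flow to enter $D$) and converges to $x_0$. There is no secondary well, no saddle in $D$, and the quasi-potential is simply $V(y)=2(f(y)-f(x_0))$.

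For the second estimate $h_{C,D^c}(x)\ge 1-e^{-c/\ve}$ the plan is straightforward. Fix $x\in K$, let $\varphi_\cdot(x)$ be the deterministic trajectory, and choose $T_K>0$ and $\eta>0$ uniform on $K$ such that $\varphi_{T_K}(x)\in\mathrm{int}(C)$ and $\mathrm{dist}(\varphi_{[0,T_K]}(x),\pa D)\ge\eta$; this is possible because $K$ is compact and the flow never touches $\pa D$. By the standard Freidlin--Wentzell tube estimate, the probability that $X_\cdot$ deviates from $\varphi_\cdot(x)$ by more than $\eta/2$ on $[0,T_K]$ is bounded by $e^{-c/\ve}$ with $c>0$ uniform on $K$. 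On the complementary event $\tau_C\le T_K<\tau_{D^c}$, yielding the claim.

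For the first estimate $\lambda_\ve\mathbb E_x[\tau_{D^c}]=1+O(e^{-c/\ve})$ I would exploit the self-adjointness of $L_\ve$. Let $(u_k)_{k\ge 1}$ be the $L^2(D,e^{-f/\ve}dx)$-orthonormal eigenbasis with eigenvalues $\lambda_\ve=\lambda_1<\lambda_2\le\cdots$. The function $u(x)=\mathbb E_x[\tau_{D^c}]$ is the unique $C^\infty(\overline D)$ solution of $L_\ve u=1$ on $D$, $u=0$ on $\pa D$, hence
$$u=\sum_{k\ge 1}\frac{\langle 1,u_k\rangle}{\lambda_k}u_k,$$
with $\langle\cdot,\cdot\rangle$ the $L^2(e^{-f/\ve}dx)$ inner product. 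The heart of the argument is a semiclassical spectral gap $\lambda_2\ge\lambda_\ve\, e^{c/\ve}$: with only one critical point in $D$ and $\min_{\pa D}f>f(x_0)$, $\lambda_2$ is at worst polynomial in $\ve$, whereas $\lambda_\ve$ is exponentially small. Plugging the gap into the expansion and invoking elliptic regularity to pass from $L^2$ to uniform bounds on $K$, one gets $\lambda_\ve u(x)=\langle 1,u_1\rangle u_1(x)+O(e^{-c/\ve})$ uniformly on $K$. One then checks $\langle 1,u_1\rangle u_1\equiv 1+O(e^{-c/\ve})$ on $K$: since $u_1>0$ satisfies $L_\ve u_1=\lambda_\ve u_1$ with exponentially small $\lambda_\ve$, Feynman--Kac gives $u_1(x)=\mathbb E_x[u_1(X_{\tau_C})e^{-\lambda_\ve\tau_C}\mathbf{1}_{\tau_C<\tau_{D^c}}]$, which combined with the second assertion forces $u_1$ to be essentially constant on $K$; the $L^2_\ve$-normalization then fixes its value via Laplace's method at $x_0$.

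The main obstacle is establishing the semiclassical spectral gap $\lambda_2/\lambda_\ve=e^{c/\ve}$, the nontrivial analytic ingredient. Day's route actually sidesteps it by a probabilistic regeneration argument: the strong hitting estimate of the second assertion permits writing $\mathbb E_x[\tau_{D^c}]=\mathbb E_x[\tau_C]+\mathbb E_{x_0}[\tau_{D^c}]+O(e^{-c/\ve})\,\mathbb E_{x_0}[\tau_{D^c}]$, so that $x\mapsto\mathbb E_x[\tau_{D^c}]$ is exponentially close to the constant $\mathbb E_{x_0}[\tau_{D^c}]$, which by a variational identity equals $\lambda_\ve^{-1}(1+O(e^{-c/\ve}))$. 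Either route ultimately leans on the same geometric fact: under \textbf{[H-D]}, the interior of $D$ contains no metastable substructure.
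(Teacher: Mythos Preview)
The paper does not prove Proposition~\ref{day_res}; it is quoted verbatim from \cite[Corollary 1]{Day2} and \cite[Theorem 2]{Day4} and used as a black box (the subsequent remark merely observes that Day's argument in fact yields the exponentially small error term, uniformly on compacts). So there is no ``paper's own proof'' to compare against.

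Your sketch is a reasonable summary of the ideas behind Day's results. The argument for the second assertion (the Freidlin--Wentzell tube estimate around the deterministic trajectory, which under \textbf{[H-D]} reaches $C$ in finite time uniformly over $K$ without touching $\pa D$) is essentially correct and complete. For the first assertion your two outlines are both in the right direction but each has a loose end. In the spectral route, bounding the tail $\sum_{k\ge 2}(\lambda_\ve/\lambda_k)\langle 1,u_k\rangle u_k$ in $L^\infty(K)$ needs more than just $\lambda_2\ge e^{c/\ve}\lambda_\ve$; you have to control the growth of the eigenfunctions on $K$ relative to the weighted $L^2$-normalization, which is where the real work lies. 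In the regeneration route, your last sentence asserts that $\mathbb E_{x_0}[\tau_{D^c}]=\lambda_\ve^{-1}(1+O(e^{-c/\ve}))$ ``by a variational identity'', but that is precisely the statement to be proved, so as written the argument is circular. Day's actual argument closes this loop by showing first that $\tau_{D^c}/\mathbb E_{x_0}[\tau_{D^c}]$ is asymptotically exponential with parameter $1$ (via the regeneration/hitting estimate you describe) and then identifying the rate with $\lambda_\ve$ through the spectral representation of the survival probability; neither half alone suffices.
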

\begin{remark}
In~\cite[Corollary 1]{Day2}, the result on $\lambda_{\ve} \mathbb E_x[\tau_{D^c}]$ is not stated with an error term. However, in view of the proof of  \cite[Corollary 1]{Day2}, the error term is $O(e^{-\frac{c}{\ve}})$ and is uniform with respect to $x$ in a compact subset of $D$.
\end{remark}
\noindent
Proposition~\ref{day_res} implies that in the limit $\ve \to 0$ and uniformly with respect to  $x \in  K$:
\begin{equation}\label{eq.lev}
 \mathbb E_x[\tau_{D^c}]=\mathbb E_{x_0}[\tau_{D^c}](1+O(e^{-\frac{c}{\ve}})). 
\end{equation}
We are now in position to obtain a first estimate on the mean exit time of $D$. Using~\eqref{eq.lev},~\eqref{rela} and~\eqref{Cap},  there exists $c>0$ such that in the limit $\ve \to 0$:
$$\mathbb E_{x_0}[\tau_{D^c}]=\frac{\displaystyle \int_{D}e^{-\frac{1}{\ve}f(x)} h_{C, D^c}(x) dx}{ {\rm cap}_C(D^c)  }(1 +O(e^{-\frac{c}{\ve}})).$$
Moreover, since $h_{C, D^c}\equiv 1$ on $C$, $h_{C, D^c}\le 1$ on $D$, $f(x)\ge \max_{\overline C} f>f(x_0)$ for all $x\in D\setminus C$ and using Laplace's method (since $x_0$ is non degenerate), one obtains that there exits $c>0$ such that in the limit $\ve \to 0$:
\begin{align*}
\int_{D}e^{-\frac{1}{\ve}f(x)} h_{C, D^c}(x) dx&= \int_{C}e^{-\frac{1}{\ve}f(x)} \, dx+ O(e^{-\frac{1}{\ve} (f(x_0)+c)})\\
&=\frac{ (2\pi\ve)^{\frac d2}   }{  \sqrt{{\rm det}\, \Hess f(x_0) } }\, e^{-\frac{1}{\ve}f(x_0)} \big  (1+O(\ve) \big ).
\end{align*}
Thus, one has the following result.
\begin{lemma}
Let us assume that the assumption~\textbf{[H-D]} is satisfied. Then, 
in the limit $\ve\to 0$:
\begin{equation}\label{rela1-2}
\mathbb E_{x_0}[\tau_{D^c}]=\frac{ (2\pi\ve)^{\frac d2}   }{  \sqrt{{\rm det}\, \Hess f(x_0) } \, {\rm cap}_C(D^c)  }\big (1 +O(\ve )\big),
\end{equation}
where $\tau_{D^c}$ is defined by~\eqref{eq.tauD} and ${\rm cap}_C(D^c)  $ by~\eqref{Cap}.
\end{lemma}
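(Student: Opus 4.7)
The plan is to combine the potential-theoretic identity~\eqref{rela} with the two exponentially sharp controls supplied by Proposition~\ref{day_res} (namely $\lambda_\ve\mathbb E_x[\tau_{D^c}]=1+O(e^{-c/\ve})$ and $h_{C,D^c}(x)\ge 1-e^{-c/\ve}$), and then to evaluate the remaining weighted integral of $h_{C,D^c}$ against $e^{-f/\ve}$ by Laplace's method at $x_0$.

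First, since $\pa C$ is a compact subset of $D$, the consequence~\eqref{eq.lev} of Proposition~\ref{day_res} gives $\mathbb E_z[\tau_{D^c}]=\mathbb E_{x_0}[\tau_{D^c}]\bigl(1+O(e^{-c/\ve})\bigr)$ uniformly in $z\in\pa C$. I would insert this into~\eqref{rela}, pull $\mathbb E_{x_0}[\tau_{D^c}]$ out of the boundary integral, and use the definition~\eqref{Cap} of the capacity to rewrite the identity as
$$\mathbb E_{x_0}[\tau_{D^c}]\,{\rm cap}_C(D^c)=\bigl(1+O(e^{-c/\ve})\bigr)\int_{D}e^{-\frac{1}{\ve}f(x)}\,h_{C,D^c}(x)\,dx.$$

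Next I would evaluate the right-hand integral by splitting $D=C\cup(D\setminus C)$. On $C$ the function $h_{C,D^c}$ is identically $1$, so the contribution of that piece is $\int_{C}e^{-f/\ve}\,dx$, which standard Laplace asymptotics at the non-degenerate minimum $x_0\in\mathrm{int}(C)$ evaluates as
$$\frac{(2\pi\ve)^{d/2}}{\sqrt{\det\Hess f(x_0)}}\,e^{-\frac{1}{\ve}f(x_0)}\bigl(1+O(\ve)\bigr).$$
On $D\setminus C$, I would use $0\le h_{C,D^c}\le 1$ together with the strict bound $\min_{\overline{D\setminus C}}f\ge f(x_0)+c'$ for some $c'>0$, which holds because $x_0$ is the unique minimum of $f$ in $\overline D$ and lies in the interior of $C$; this yields a contribution of order $O(e^{-(f(x_0)+c')/\ve})$, exponentially smaller than the leading term.

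The only point requiring care — more of a bookkeeping step than a real obstacle — is to check that the $O(e^{-c/\ve})$ prefactor from Proposition~\ref{day_res} and the exponentially small tail over $D\setminus C$ can both be absorbed into the $O(\ve)$ remainder produced by Laplace's method on the main term of order $\ve^{d/2}e^{-f(x_0)/\ve}$. Once this is verified, dividing by ${\rm cap}_C(D^c)$ yields exactly the formula~\eqref{rela1-2}.
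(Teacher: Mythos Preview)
Your proposal is correct and follows essentially the same route as the paper: both arguments feed~\eqref{eq.lev} into the potential-theoretic identity~\eqref{rela} and the definition~\eqref{Cap} to obtain $\mathbb E_{x_0}[\tau_{D^c}]\,{\rm cap}_C(D^c)=(1+O(e^{-c/\ve}))\int_D e^{-f/\ve}h_{C,D^c}$, then split $D=C\cup(D\setminus C)$, apply Laplace's method on $C$ where $h_{C,D^c}\equiv1$, and bound the remainder using $0\le h_{C,D^c}\le1$ and $f>f(x_0)+c'$ on $D\setminus C$. One small remark: the lower bound $h_{C,D^c}\ge 1-e^{-c/\ve}$ that you list in your plan is not actually used in your argument (nor in the paper's), since on $C$ you have $h_{C,D^c}=1$ exactly; that estimate only enters later in the lower bound on the capacity.
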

To prove Theorem~\ref{th.main}, it remains to give an estimate on ${\rm cap}_C(D^c) $ in the limit $\ve \to 0$. This is the purpose of the next section. 

\section{Proofs of Theorem~\ref{th.main} and Corollary~\ref{co.lambda}} 
 \label{sec.4}
In this section, one obtains  sharp lower and upper bounds on the capacity~${\rm cap}_C(D^c)$. The following proof is inspired by the one made in~\cite[Theorem 3.1]{BEGK}. However, the functions involved here to get the lower and upper bounds on the capacity~${\rm cap}_C(D^c)$ are constructed in the whole neighborhood  $V_\delta$ of $\pa D$ using the coordinates $(x',x_d)$ introduced in Section~\ref{sec.coo}.  This is indeed    needed   since the whole boundary of $D$ appears in the asymptotics estimates stated in Theorem~\ref{th.main}. Moreover, the coordinates $(x',x_d)$   are particularly convenient for computations since in these coordinates, the tensor metric   has the form~\eqref{G}. 
Finally the support of these functions  is $V_\delta$ and thus  does not depend on $\ve$. This allows us to obtain a  remainder term  $O(\ve)$ in Theorem~\ref{th.main}.   \\
From \cite[Section 2]{BEGK}, one has the following variational principle:
\begin{align}
\label{eq.mini}
{\rm cap}_C(D^c)&=\ve \int_{D\setminus C} \big \vert \nabla h_{C, D^c}(x)\big \vert^2e^{-\frac{1}{\ve}f(x)}dx\\
\nonumber
&=\inf_{h\in H_{C, D^c}}\ve \int_{D\setminus C}\big \vert \nabla h (x)\big \vert^2e^{-\frac{1}{\ve}f(x)}dx,
\end{align}
where
$$H_{C, D^c}=\big \{ h\in H^1(\mathbb R^d),\,  h(x)=1 \text{ for } x\in C, \,    h(x)=0 \text{ for } x\in  D^c\}.$$
Formula~\eqref{eq.mini}  holds since the function $h_{C, D^c}$ is   a minimizer of the functional
$$h\in H_{C, D^c} \mapsto \ve \int_{D\setminus C}\big \vert \nabla h (x)\big \vert^2e^{-\frac{1}{\ve}f(x)}dx.$$
Using this variational principle, one can get a sharp upper bound on ${\rm cap}_C(D^c)$ by choosing a suitable function $h\in H_{C, D^c}$.

\subsection{Upper bound on ${\rm cap}_C(D^c)$}

In this section, one gets  a sharp  upper bound on ${\rm cap}_C(D^c)$. 
Let $V_{\delta}$ be defined by~\eqref{Cdelta} and let $h\in H_{C, D^c}$. 
From~\ref{eq.mini}, one has 
\begin{equation}\label{up1}
{\rm cap}_C(D^c)\le \ve \int_{V_{\delta}}\big \vert \nabla h (x)\big \vert^2e^{-\frac{1}{\ve}f(x)}dx +  \ve \int_{D\setminus V_{\delta}}\big \vert \nabla h (x)\big \vert^2e^{-\frac{1}{\ve}f(x)}dx.
\end{equation}
From~\eqref{partition},~\eqref{1},~\eqref{chg} and~\eqref{G}, one has:
\begin{align}
\nonumber
&\ve \int_{V_{\delta}}\big \vert \nabla h (x)\big \vert^2e^{-\frac{1}{\ve}f(x)}dx\\
\nonumber
&= \ve \sum_{k=1}^N \ \int_{x'\in \Gamma_k(\supp \rho_k)}\!\!\!\!\rho_k(\Gamma_k^{-1}(x'))\\
\label{eq.egalite}
&\quad \times\int_0^{\delta} \,^t\tilde \nabla h(x',x_d) G_k(x',x_d)^{-1} \tilde \nabla h(x',x_d) \,  e^{-\frac{1}{\ve}f(x',x_d)}\, {\rm jac}\, \Upsilon_k(x',x_d) \, dx_d\, dx'
\end{align}
where $^t\tilde \nabla=(\pa_{x'}, \pa_{x_d})$, $\Upsilon_k$ is defined by~\eqref{chg}, $G_k$ is the tensor metric associated with the change of variable $x=\Upsilon_k(x',x_d)$ (see~\eqref{G}) and ${\rm jac}\, \Upsilon_k = \sqrt{ {\rm det} \, G_k }$ is the jacobian of~$\Upsilon_k$. \\
Let us now consider   the following function:
$$x_d\in [0,\delta]\mapsto g(x_d)= \frac{\displaystyle \int_0^{x_d} e^{-\frac{t}{\ve}} dt }{ \displaystyle \int_0^{\delta} e^{-\frac{t}{\ve}} dt }=\frac{ 1-e^{-\frac{x_d}{\ve}}}{  1-e^{-\frac{\delta}{\ve}} },$$
which satisfies $g(0)=0$ and $g(\delta)=1$. Let $h:V_\delta\to \mathbb R$ be such that  
$$  h\circ \Psi (z,x_d):=g(x_d), \text{ for all } (z,x_d)\in \pa D\times [0,\delta].$$
The function $h$ is then extended by $1$ in $D\setminus V_{\delta}$ and by $0$ outside $D$. Thus, $h$ belongs to $H_{C, D^c}$ since $C\subset D\setminus V_{\delta}$.
For all $k\in \{1,...,N\}$ and for all  $(x',x_d)\in  \Gamma_k(\supp \rho_k) \times [0,\delta]$, denoting with   a slight abuse of notation $h\circ \Upsilon_k$ by $h$, 
one has $h(x',x_d)=g(x_d)$ and then for any  $(x',x_d)\in \Gamma_k(\supp \rho_k)\times [0,\delta]$:
$$\pa_{x'}h(x',x_d)=0 \text{ and } \pa_{x_d}h(x',x_d)=\frac{d}{dx_d}g(x_d).$$
 From~\eqref{G},~\eqref{ff},~\eqref{up1}, and~\eqref{eq.egalite} together with the fact that $  \nabla h=0$ on $D\setminus V_{\delta}$, one has:
\begin{align*}
{\rm cap}_C(D^c)&\le \ve \sum_{k=1}^N\  \int_{x'\in \Gamma_k(\supp \rho_k)}\!\! \frac{ \rho_k(\Gamma_k^{-1}(x')) \, e^{-\frac{1}{\ve}f_+(x',0)} }{ \ve^2\big (1-e^{-\frac{\delta}{\ve}}\big)^2 }  \\
&\quad \times\int_0^{\delta}  e^{-\frac{x_d}{\ve}} (G_k)_{dd}(x',x_d)^{-1}\, {\rm jac}\, \Upsilon_k(x',x_d)dx_d\, dx'.
\end{align*}
Now let us notice that for any function $\varphi\in C^{\infty}(\Gamma_k(\supp \rho_k)\times [0,\delta], \mathbb R^*_+)$, one has in the limit $\ve \to 0$:
\begin{align}
\label{eq.lap}
  \int_0^{\delta} \varphi(x',x_d) e^{-\frac{x_d}{\ve}}dx_d  &=\ve\, \varphi(x',0)  \big(1   + O(\ve)\big),
\end{align}
uniformly with respect to $x'\in \Gamma_k(\supp \rho_k)$.
Thus, applying~\eqref{eq.lap} with $\varphi= (G_k)_{dd}^{-1}\, {\rm jac}\, \Upsilon_k$,  it holds in the limit $\ve \to 0$:
\begin{align*}
{\rm cap}_C(D^c)&\le   \sum_{k=1}^N \ \int_{x'\in \Gamma_k(\supp \rho_k)}  \frac{\rho_k(\Gamma_k^{-1}(x')) e^{-\frac{1}{\ve}f_+(x',0)}}{ \big (1-e^{-\frac{\delta}{\ve}}\big )^2 }\\
&\quad \times  (G_k)_{dd}(x',0)^{-1} \, {\rm jac}\,\Upsilon_k(x',0)dx'\, \big  (1+O(\ve) \big ).
\end{align*}
Finally, using~\eqref{eq.ii},~\eqref{chg2} and~\eqref{eq.Gdd}, it holds in the limit $\ve \to 0$:
\begin{align*}
{\rm cap}_C(D^c)&\le   \sum_{k=1}^N \ \int_{x'\in \Gamma_k(\supp \rho_k)}  \!\!\!\!  \rho_k(\Gamma_k^{-1}(x')) e^{-\frac{1}{\ve}f(x',0)}   \pa_{\mbf n}f(x',0) \, {\rm jac}\, \Gamma_k^{-1}(x')dx' \, \big  (1+O(\ve) \big ).
\end{align*}
Therefore, since   from \eqref{fmoinsfplus} and Lemma~\ref{eikonal-boundary}, $f(x',0)=f(x)$ for all  $x=\Upsilon_k(x',0) \in \pa D$, 
one has following result.
\begin{lemma}
Let us assume that the assumption~\textbf{[H-D]} is satisfied. Then,  
it holds in the limit $\ve \to 0$:
\begin{align}\label{minor1}
{\rm cap}_C(D^c)&\le   \int_{\pa D} \pa_{\mbf n}f(\sigma)\, e^{-\frac{1}{\ve}f(\sigma)}d\sigma\,   \big  (1+O(\ve) \big ),
\end{align}
where, we recall, ${\rm cap}_C(D^c)$ is defined by~\eqref{Cap}.
\end{lemma}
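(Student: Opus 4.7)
The plan is to exploit the Dirichlet variational characterization of the capacity from~\eqref{eq.mini}, plug in a carefully chosen test function $h\in H_{C,D^c}$, and then carry out the resulting integral in the adapted coordinates $(x',x_d)$ of Section~\ref{sec.coo}, where the combination $e^{-\frac{1}{\ve}f}$ splits as $e^{-\frac{1}{\ve}f_+(x',0)}\,e^{\frac{x_d}{\ve}}$ thanks to~\eqref{ff}. Since only the $x_d$-derivative of $h$ will contribute, the block-diagonal form~\eqref{G} of the metric reduces the quadratic form to $(G_k)_{dd}^{-1}(\partial_{x_d}h)^2$, and the identities~\eqref{eq.Gdd} and~\eqref{chg2} at $x_d=0$ will produce the boundary factor $\partial_{\mathbf n}f(\sigma)$.

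Concretely, I would choose $h$ to depend only on $x_d$ in the tubular neighborhood $V_\delta$, extended by $1$ on $D\setminus V_\delta$ (which contains the ball $C$) and by $0$ outside $D$. The natural optimal profile, selected so that the integrand is constant in $x_d$ once the dominant exponential $e^{-x_d/\ve}$ is accounted for, is
\[
g(x_d)=\frac{1-e^{-x_d/\ve}}{1-e^{-\delta/\ve}},
\]
with $g(0)=0$, $g(\delta)=1$. With this $h$, using the partition of unity $(\rho_k)_k$ and the parametrizations $\Upsilon_k$ of~\eqref{chg}, \eqref{eq.mini} gives
\[
{\rm cap}_C(D^c)\le \sum_{k=1}^N \int_{\Gamma_k(\supp\rho_k)} \!\!\rho_k(\Gamma_k^{-1}(x'))\,e^{-\frac{1}{\ve}f_+(x',0)}\int_0^\delta \frac{e^{-x_d/\ve}}{\ve\bigl(1-e^{-\delta/\ve}\bigr)^2}\,(G_k)_{dd}^{-1}(x',x_d)\,{\rm jac}\,\Upsilon_k(x',x_d)\,dx_d\,dx'.
\]

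The next step is Laplace's method in the $x_d$ variable, concentrated at $x_d=0$: for any smooth positive $\varphi$, $\int_0^\delta \varphi(x',x_d)\,e^{-x_d/\ve}\,dx_d=\ve\,\varphi(x',0)(1+O(\ve))$ uniformly in $x'$. Applied with $\varphi=(G_k)_{dd}^{-1}\,{\rm jac}\,\Upsilon_k$, and using $(G_k)_{dd}(x',0)=1/(\partial_{\mathbf n}f)^2$ from~\eqref{eq.Gdd} together with ${\rm jac}\,\Upsilon_k(x',0)=(\partial_{\mathbf n}f)^{-1}\,{\rm jac}\,\Gamma_k^{-1}(x')$ from~\eqref{chg2} and~\eqref{eq.ii2}, the integrand at $x_d=0$ becomes $\partial_{\mathbf n}f(x',0)\,{\rm jac}\,\Gamma_k^{-1}(x')$. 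Noting that $f_+(x',0)=f(x',0)$ on $\partial D$ by~\eqref{fmoinsfplus} and Lemma~\ref{eikonal-boundary}, and that $\bigl(1-e^{-\delta/\ve}\bigr)^{-2}=1+O(e^{-\delta/\ve})$, the sum over $k$ of the pieces reassembles (via $\sum_k \rho_k\equiv 1$ on $\partial D$) into a single surface integral $\int_{\partial D}\partial_{\mathbf n}f(\sigma)\,e^{-\frac{1}{\ve}f(\sigma)}\,d\sigma$, yielding the claimed bound.

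The main obstacle, and the reason this computation is nontrivial, is ensuring that the geometric factors $(G_k)_{dd}$ and ${\rm jac}\,\Upsilon_k$ combine at $x_d=0$ in a coordinate-invariant way to produce exactly $\partial_{\mathbf n}f\,d\sigma$ with no leftover Jacobian from the local charts $\Gamma_k$. This is precisely where the eikonal construction of $\Phi$ (equivalently, the flow parametrization via $\gamma_x$) pays off: it is the reason the transverse coordinate $x_d=f_-$ makes $f$ linear in $x_d$ and forces the metric to be block diagonal, which simplifies the quadratic form. A secondary, routine check is that the uniformity in $x'$ of Laplace's asymptotics survives the $O(\ve)$ remainder, and that the exponentially small contribution from $(1-e^{-\delta/\ve})^{-2}$ can be absorbed into the $O(\ve)$ factor.
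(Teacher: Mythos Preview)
Your proposal is correct and follows essentially the same approach as the paper: the same variational principle~\eqref{eq.mini}, the same test function $h\circ\Psi(z,x_d)=g(x_d)$ with $g(x_d)=(1-e^{-x_d/\ve})/(1-e^{-\delta/\ve})$ extended by~$1$ on $D\setminus V_\delta$, the same reduction via~\eqref{G} and~\eqref{ff}, the same Laplace asymptotics~\eqref{eq.lap} applied to $\varphi=(G_k)_{dd}^{-1}\,{\rm jac}\,\Upsilon_k$, and the same identification at $x_d=0$ via~\eqref{eq.Gdd},~\eqref{chg2},~\eqref{eq.ii2}. Your commentary on the ``main obstacle'' is also apt, as the block-diagonal metric and the identity~\eqref{eq.chi_k(x',0)} are exactly what make the local pieces reassemble into the coordinate-free boundary integral.
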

Let us now give a sharp lower bound on ${\rm cap}_C(D^c)$.
 
\subsection{Lower bound on ${\rm cap}_C(D^c)$}

In this section, one gets a   sharp lower bound on ${\rm cap}_C(D^c)$. 
Let $V_{\delta}$ be defined by~\eqref{Cdelta}. Using~\eqref{eq.mini},~\eqref{partition},~\eqref{1},~\eqref{chg} and~\eqref{G},   one has:
\begin{align}
\nonumber
{\rm cap}_C(D^c)&\ge \ve \int_{V_{\delta}}\big \vert \nabla h_{C, D^c} (x)\big \vert^2e^{-\frac{1}{\ve}f(x)}dx\\
\label{eq.e1}
&\ge \ve \sum_{k=1}^N \ \int_{x'\in \Gamma_k(\supp \rho_k)}\!\!\!\!\!\!\!\!\rho_k(\Gamma_k^{-1}(x'))\, e^{-\frac{1}{\ve}f_+(x',0)}\int_0^{\delta}\, L_k(x',x_d)\, dx_d\, dx'
\end{align}
with 
$$
 L_k(x',x_d):=  \big \vert \partial_{x_d} h_{C, D^c}(x',x_d) \big \vert^2\, (G_k)_{dd}(x',x_d)^{-1}  e^{\frac{1}{\ve}x_d}\, {\rm jac}\, \Upsilon_k(x',x_d).
$$
Let us define for $k\in \{1,...,N\}$ and $(x',x_d)\in  \Gamma_k(\supp \rho_k) \times [0,\delta]$: 
\begin{equation}\label{eq.phi}
\chi_k(x',x_d):=(G_k)_{dd}(x',x_d)^{-1}\,{\rm jac}\, \Upsilon_k(x',x_d).
\end{equation}
Notice that from~\eqref{eq.ii2},~\eqref{chg2}, and~\eqref{eq.Gdd}, it holds for all $x'\in \Gamma_k(\supp \rho_k)$,
\begin{equation}\label{eq.chi_k(x',0)}
\chi_k(x',0)=\partial_{\mbf n} f(x',0) \,{\rm jac}\, \Gamma_k^{-1}(x').
\end{equation}
The function $\chi_k$ satisfies 
\begin{equation}\label{chi-minimisation}
\min_{\Gamma_k(\supp \rho_k)\times [0,\delta]}\chi_k >0.
\end{equation} 
Let us consider  $k\in \{1,...,N\}$ and $x'\in  \Gamma_k(\supp \rho_k)$. Then, it holds:
\begin{align}
\nonumber
\int_0^{\delta}\, L_k(x',x_d)\, dx_d &=
\int_0^{\delta}\, \big \vert \pa_{t} h_{C, D^c}(x',t) \big\vert^2\, \chi_k(x',t)\,e^{\frac{t}{\ve}}\,dt  \\
\label{eq.mi1}
&\ge  \inf_{\substack{ g\in H^1(0,\delta)\\ g(0)=0\\ g(\delta)= h_{C, D^c}(x',\delta)}}\int_0^{\delta}\Big \vert\frac{d}{dt} g(t)\Big\vert^2\chi_k(x',t) \,e^{\frac{t}{\ve}}dt. 
\end{align}
Let us now prove that
\begin{equation}\label{eq.mi}
\inf_{\substack{ g\in H^1(0,\delta)\\ g(0)=0\\ g(\delta)= h_{C, D^c}(x',\delta)}}\int_0^{\delta}\Big \vert\frac{d}{dt} g(t)\Big\vert^2\chi_k(x',t) \,e^{\frac{t}{\ve}}dt=\int_0^{\delta} \big \vert\pa_t g_{x'}^*(t)\big \vert^2\chi_k(x',t) \,e^{\frac{t}{\ve}}dt,
\end{equation}
where 
$$g_{x'}^*(t)=\frac{\displaystyle \int_0^{t}\chi_k(x',s)^{-1} e^{-\frac{s}{\ve}} ds    }{\displaystyle \int_0^{\delta} \chi_k(x',s)^{-1} e^{-\frac{s}{\ve}} ds }\, h_{C, D^c}(x',\delta).$$
The set 
$K=\{ g\in H^1(0,\delta), g(0)=0 \text{ and }  g(\delta)= h_{C, D^c}(x',\delta)\big\} $ is a closed convex subset of $H^1(0,\delta)$ and the  functional 
$$F: \theta \in   H^1(0,\delta) \mapsto \int_0^{\delta}  \Big \vert\frac{d}{dt} \theta(t)\Big\vert^2\chi_k(x',t) \,e^{\frac{t}{\ve}}dt$$
is continuous and from~\eqref{chi-minimisation}, it is strongly convex. Furthermore, since for all $u\in K$, $u(0)=0$, there exists $C>0$ such that for all $g\in K$, 
$$\int_0^\delta g^2\le C \int_0^\delta \Big \vert\frac{d}{dt} g(t)\Big\vert^2.$$
Thus, using in addition~\eqref{chi-minimisation},  there exists $c>0$ such that for all $g\in K$, 
\begin{equation}\label{eq.co}
\int_0^\delta g^2 +  \int_0^\delta \Big \vert\frac{d}{dt} g(t)\Big\vert^2\le c \, F(g).
\end{equation}
Let us consider a  sequence $(g_n)_{n\ge 0}\in K^{\mathbb N}$ such that $\lim_{n\to \infty}F(g_n)= \inf_K F$.  Then, from~\eqref{eq.co}, $(g_n)_{n\ge 0}$ is a bounded sequence in $H^{1}(0,\delta)$ and thus converges for the weak topology  of $H^1(0,\delta)$ towards some $g\in H^1(0,\delta)$. Since $F$ is continuous and convex on~$K$, it is a lower semi-continuous function for the weak topology in $H^1(0,\delta)$. Therefore, $ \inf_K F\le F(g)$ and since  $g\in K$, $g$ is a minimizer of $F$ on $K$. Finally, because $F$ is strongly convex, $g$ is the unique  minimizer of $F$ on $K$. Let $\alpha\in \mathbb R$ and $\varphi\in C_c^\infty(0,\delta)$. Then, it holds $g+\alpha \varphi\in K$ and thus 
$$ F(g)\le F(g+\alpha \varphi)= F(g) +2 \alpha \int_0^\delta \frac{d}{dt}  g(t)\, \frac{d}{dt}  \varphi(t) \, \chi_k(x',t) \,e^{\frac{t}{\ve}}dt + \alpha^2 \,F(\varphi) .$$
Thus, $g\in H^1(0,\delta)$ is a weak solution to the following one dimensional Dirichlet problem on $(0,\delta)$: 
\begin{equation}\label{eq.ell}
\left\{ 
\begin{aligned}
\frac{d}{dt}\Big( e^{\frac{t}{\ve}}\chi_k(x',t)\, \frac{d}{dt} g(t)\Big) &=0 \text{ on }  (0,\delta),\\
g(0)&=0, \\
g(\delta)&=h_{C, D^c}(x',\delta).
\end{aligned}
\right.
\end{equation}
From~\eqref{chi-minimisation}, one can use the Lax-Milgram Theorem which implies that there exists a unique solution in $H^1(0,\delta)$ of~\eqref{eq.ell}. Clearly, this solution is given by
$$g_{x'}^*(t)=\frac{\displaystyle \int_0^{t}\chi_k(x',s)^{-1} e^{-\frac{s}{\ve}} ds    }{\displaystyle \int_0^{\delta} \chi_k(x',s)^{-1} e^{-\frac{s}{\ve}} ds }\, h_{C, D^c}(x',\delta),$$
and thus $g=g_{x'}^*$. 
This concludes the proof of~\eqref{eq.mi}. 
Using~\eqref{eq.e1},~\eqref{eq.mi1} and~\eqref{eq.mi} together with the second statement in Proposition~\ref{day_res}, there exists $c>0$ such that in the limit $\ve \to 0$:
\begin{align*}
{\rm cap}_C(D^c)& \ge \ve   \sum_{k=1}^N \ \int_{x'\in \Gamma_k(\supp \rho_k)}\!\!\!\!\!\! \rho_k(\Gamma_k^{-1}(x'))\, e^{-\frac{1}{\ve}f_+(x',0)}\\
&\quad \times \int_0^{\delta}\, \big \vert \pa_{x_d} g_{x'}^*(x_d) \big \vert^2\,\chi_k(x',x_d) e^{\frac{1}{\ve}x_d} dx_d\, dx'\\
&= \ve   \sum_{k=1}^N\ \int_{x'\in \Gamma_k(\supp \rho_k)}   \!\!\!\!\!\rho_k(\Gamma_k^{-1}(x'))\,  h^2_{C, D^c}(x',\delta)\, e^{-\frac{1}{\ve}f_+(x',0)} \\
&\quad \times\int_0^{\delta} \frac{  \chi_k(x',x_d)^{-1}}{ \Big( \displaystyle \int_0^{\delta} \chi_k(x',s)^{-1} e^{-\frac{s}{\ve}} ds  \Big)^2  }\, e^{-\frac{1}{\ve}x_d}  \, dx_d\, dx'\\
&\ge  \ve  (1-e^{-\frac{c}{\ve}}\big )^2 \sum_{k=1}^N \ \int_{x'\in \Gamma_k(\supp \rho_k)}   \!\!\!\!\!\rho_k(\Gamma_k^{-1}(x')) \, e^{-\frac{1}{\ve}f_+(x',0)} \\
&\quad \times\int_0^{\delta} \frac{  \chi_k(x',x_d)^{-1}}{ \Big( \displaystyle \int_0^{\delta} \chi_k(x',s)^{-1} e^{-\frac{s}{\ve}} ds  \Big)^2  }\, e^{-\frac{1}{\ve}x_d}  \, dx_d\, dx'.
\end{align*} 
Then, using~\eqref{eq.lap},~\eqref{eq.phi}, and~\eqref{eq.chi_k(x',0)}, one has  in the limit $\ve \to 0$:
\begin{align*}
{\rm cap}_C(D^c)&\ge   \sum_{k=1}^N \int_{x'\in \Gamma_k(\supp \rho_k)}  \!\!\!\!  \rho_k(\Gamma_k^{-1}(x')) e^{-\frac{1}{\ve}f(x',0)}   \pa_{\mbf n}f(x',0) \, {\rm jac}\, \Gamma_k^{-1}(x')dx' \, \big  (1+O(\ve) \big ).
\end{align*}
Therefore, since   from \eqref{fmoinsfplus} and Lemma~\ref{eikonal-boundary}, $f(x',0)=f(x)$ for all  $x=\Upsilon_k(x',0) \in \pa D$, 
 one has the following lower bound on ${\rm cap}_C(D^c)$. 
\begin{lemma}
Let us assume that the assumption~\textbf{[H-D]} is satisfied. Then,  
it holds in the limit $\ve \to 0$:
\begin{align}\label{minor2}
{\rm cap}_C(D^c)&\ge   \int_{\pa D} \pa_{\mbf n}f(\sigma)\, e^{-\frac{1}{\ve}f(\sigma)}d\sigma\,   \big  (1+O(\ve) \big ).
\end{align}
where, we recall, ${\rm cap}_C(D^c)$ is defined by~\eqref{Cap}.
\end{lemma}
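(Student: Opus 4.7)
I would mirror the structure of the upper bound proof, but replace the test function by $h_{C,D^c}$ itself and obtain a pointwise lower bound via a one-dimensional minimization. Starting from the Dirichlet identity $\eqref{eq.mini}$, I restrict the integration to $V_\delta$ and expand in the charts of Section~\ref{sec.coo}. Thanks to the block-diagonal form~\eqref{G} of the metric and the identity~\eqref{ff} for $f$ in these coordinates, the integrand is a sum of a non-negative tangential term and a normal term; dropping the tangential part yields
\begin{align*}
{\rm cap}_C(D^c) &\ge \varepsilon \sum_{k=1}^N \int_{\Gamma_k(\supp \rho_k)}\!\! \rho_k(\Gamma_k^{-1}(x'))\, e^{-\frac{1}{\varepsilon}f_+(x',0)}\\
&\quad \times \int_0^\delta \big|\partial_{x_d} h_{C,D^c}(x',x_d)\big|^2 \chi_k(x',x_d)\, e^{\frac{x_d}{\varepsilon}}\,dx_d\,dx',
\end{align*}
with $\chi_k$ as in~\eqref{eq.phi}.

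Next, for each fixed $x'$ the inner integral is bounded below by the minimum of $\int_0^\delta |g'(t)|^2 \chi_k(x',t) e^{t/\varepsilon}\,dt$ over $g\in H^1(0,\delta)$ satisfying $g(0)=0$ and $g(\delta)=h_{C,D^c}(x',\delta)$. I would establish the existence, uniqueness and explicit form of the minimizer by arguing as follows: the functional is continuous and strongly convex on this closed convex set (using~\eqref{chi-minimisation} to bound $\chi_k$ from below), so a direct method gives a unique minimizer $g_{x'}^*$, and its Euler--Lagrange equation is the one-dimensional Dirichlet problem $\frac{d}{dt}\bigl(e^{t/\varepsilon}\chi_k(x',t)\, g'(t)\bigr)=0$, solved explicitly by
$$g_{x'}^*(t)=\frac{\int_0^{t}\chi_k(x',s)^{-1} e^{-s/\varepsilon}\,ds}{\int_0^{\delta}\chi_k(x',s)^{-1} e^{-s/\varepsilon}\,ds}\, h_{C,D^c}(x',\delta).$$
The minimum value is then $h_{C,D^c}(x',\delta)^2\bigl/\int_0^\delta \chi_k(x',s)^{-1}e^{-s/\varepsilon}\,ds$.

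Finally, I invoke the second assertion of Proposition~\ref{day_res}: since the set $\{(x',\delta)\}$ lies in a fixed compact subset of $D$, there is $c>0$ such that $h_{C,D^c}(x',\delta)^2 \ge (1-e^{-c/\varepsilon})^2$ uniformly in $x'$. Applying Laplace's method~\eqref{eq.lap} with $\varphi = \chi_k^{-1}$ to the denominator gives, uniformly in $x'$,
$$\int_0^\delta \chi_k(x',s)^{-1} e^{-s/\varepsilon}\,ds = \varepsilon\, \chi_k(x',0)^{-1}\bigl(1+O(\varepsilon)\bigr),$$
and substituting this together with~\eqref{eq.chi_k(x',0)}, which identifies $\chi_k(x',0)=\partial_{\mathbf n}f(x',0)\,{\rm jac}\,\Gamma_k^{-1}(x')$, and with $f_+(x',0)=f(x',0)$ on $\partial D$, I recover exactly the surface integral $\int_{\partial D}\partial_{\mathbf n}f(\sigma)e^{-f(\sigma)/\varepsilon}\,d\sigma$ after summing over the partition of unity. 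The main technical point to check is the uniformity in $x'$ of the Laplace estimate, which follows from the smoothness of $\chi_k$ and the positive lower bound~\eqref{chi-minimisation} on the compact set $\overline{\Gamma_k(\supp\rho_k)}\times[0,\delta]$; the factor $(1-e^{-c/\varepsilon})^2$ is absorbed into the $1+O(\varepsilon)$ remainder.
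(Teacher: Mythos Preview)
Your proposal is correct and follows essentially the same route as the paper: restrict~\eqref{eq.mini} to $V_\delta$, drop the tangential part using~\eqref{G} and~\eqref{ff}, minimize the inner one-dimensional integral (via the direct method and the Euler--Lagrange ODE, yielding the explicit $g_{x'}^*$), bound $h_{C,D^c}(x',\delta)^2$ from below by Proposition~\ref{day_res}, and conclude with~\eqref{eq.lap} and~\eqref{eq.chi_k(x',0)}. The only cosmetic difference is that you state the minimum value directly as $h_{C,D^c}(x',\delta)^2\big/\int_0^\delta \chi_k^{-1}e^{-s/\varepsilon}\,ds$, whereas the paper keeps the full integral of $|\partial_{x_d}g_{x'}^*|^2$ one line longer before simplifying.
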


Theorem~\ref{th.main} is then a consequence of~\eqref{minor1} and~\eqref{minor2} together with~\eqref{rela1-2} and~\eqref{eq.lev}. Corollary~\ref{co.lambda} is a consequence of Theorem~\ref{th.main} and Proposition~\ref{day_res}.

\bigskip
\noindent
\textbf{Acknowledgements.}
This work was motivated by a question of B. Helffer. 
I am very grateful to T. Leli\`evre and D. Le Peutrec  for their
suggestions and help. 
This work is supported by the European Research Council under the European
Union's Seventh Framework Programme (FP/2007–2013)/ERC Grant Agreement
number 614492.

\begin{small}

\bibliography{lambdah} 
\bibliographystyle{plain} 
 \end{small}

\end{document}